\DeclareSymbolFont{cyrletters}{OT2}{wncyr}{m}{n}
\DeclareMathSymbol{\Sha}{\mathalpha}{cyrletters}{"58}
\theoremstyle{plain}
\newtheorem{theorem}{Theorem}[section]
\newtheorem{lemma}[theorem]{Lemma}
\theoremstyle{definition}
\theoremstyle{remark}
\numberwithin{equation}{section}
\newcommand{\F}{{\mathbb F}}
\def\({\left(}
\def\){\right)}
\def\<{\left<}
\def\>{\right>}
\newcommand{\wt}[1]{\widetilde{#1}}
\newcommand{\abs}[1]{\left|#1\right|}
\def\E{{\mathbb E}}
\def\Pars{\Pi}
\def\DES{{\rm DES}}
\def\VAR{{\rm VAR}}
\def\Prob{{\bf P}}
\begin{document}

\title[central limit theorems for some set partition statistics ]
{
Central limit theorems for some set partition statistics
}
\author{Bobbie Chern}
\address{Stanford University, Department of Electrical Engineering, Stanford, CA 94305}
\email{bgchern@stanford.edu}
\author{Persi Diaconis}
\address{Stanford University, Department of Mathematics and Statistics, Sequoia Hall, 390 Serra Mall, Stanford, CA 94305-4065, USA}
\email{diaconis@math.stanford.edu}
\author{Daniel M. Kane}
\address{University of California, San Diego, Department of Mathematics, 9500 Gilman Drive \#0404, La Jolla, CA 92093}
\email{dakane@ucsd.edu}
\author{Robert C. Rhoades}
\address{Center for Communications Research, Princeton, NJ 08540}
\email{rob.rhoades@gmail.com}

\thanks{}

\date{\today}
\thispagestyle{empty} \vspace{.5cm}
\begin{abstract}
We prove the conjectured limiting normality for the number of crossings of a
uniformly chosen set partition of $[n] = \{ 1, 2, \ldots, n\}$.  The arguments
use a novel stochastic representation and are also used to prove central limit
theorems for the dimension index and the number of levels.
\end{abstract}


\maketitle

\section{Introduction}
Let $\lambda$ be a partition of the set $[n] = \{1, 2, \ldots, n\}$, so $1|2|3,
12|3, 13|2, 1|23, 123$ are the five partitions of $[3]$.  The enumerative
theory of ``supercharacters'' leads to the statistics
\begin{equation}\label{eqn:defs}
d(\lambda) = \sum_i ( M_i - m_i + 1) \ \ \text{ and } \   \ cr(\lambda) = \# \text{ of crossings of } \lambda.
\end{equation}
In $d(\lambda)$, the sum is over the blocks of $\lambda$ and $M_i$ ($m_i$) is
the largest (smallest) element of the block $i$. The statistic $cr(\lambda)$
counts $i < i' < j < j'$ with $i,j$ adjacent elements of the same block and $i', j'$ adjacent elements of the same
block (\includegraphics[scale=0.25]{crossexample.eps}).  In a companion
paper \cite{CDKR} the moments of $d(\lambda)$ and $cr(\lambda)$ are determined
as explicit linear combinations of Bell numbers $B_n$.  Numerical computations
(see  Figures \ref{fig:dimensionHistograms} and
\ref{fig:intertwiningHistograms}) suggests that normalized by their mean and
variance, these statistics have approximate normal distributions.
Figures \ref{fig:dimensionHistograms} -- \ref{fig:levelHistograms} are based on
exact counts from our new algorithms \cite{CDKR}. Figures \ref{fig:dimensionHistograms} and \ref{fig:intertwiningHistograms}
suggest good
agreement with the normal approximation for dimension index and
crossings. Figure 3 shows slower convergence for levels and suggests
a search for finite sample correction terms.
We found the limiting normality challenging to prove using available techniques (eg. moments, Fristedt's
method of conditioned limit theorems \cite{fristedt}, or Stein's method
\cite{CGS}). Indeed, the limiting normality of $cr(\lambda)$ is conjectured in
\cite{kasraoui}.

\begin{figure}[h!]
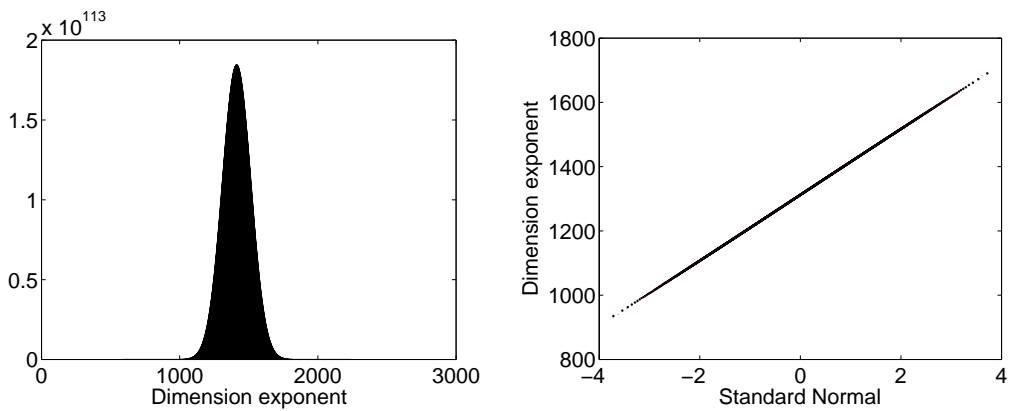

\center
\includegraphics[scale=0.45]{dim100_2.eps}
\includegraphics[scale=0.45]{dim100qq.eps}
\caption{Histogram of the dimension exponent counts for $n=100$ and the associated Q-Q plot.}
\label{fig:dimensionHistograms}
\end{figure}

\begin{figure}[h!]
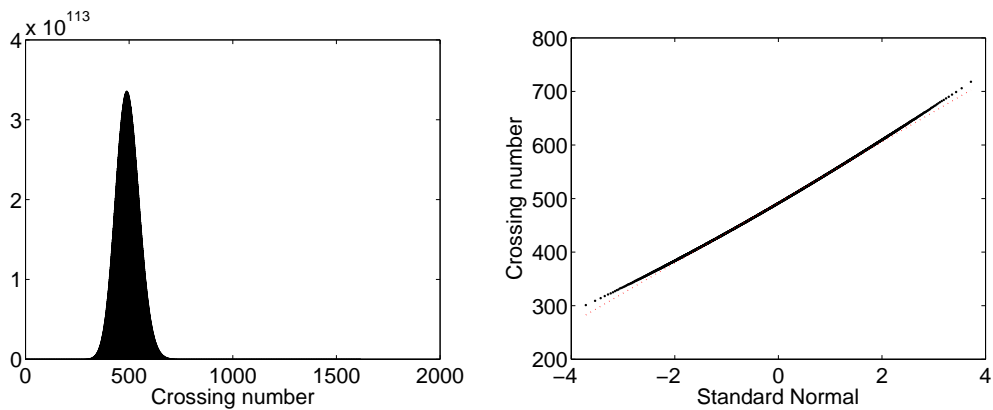

\center
\includegraphics[scale=0.45]{int100_2.eps}
\includegraphics[scale=0.45]{int100qq.eps}
\caption{Histogram of the crossing number counts for $n=100$ and the associated Q-Q plot.}
\label{fig:intertwiningHistograms}
\end{figure}

\begin{figure}[h!]
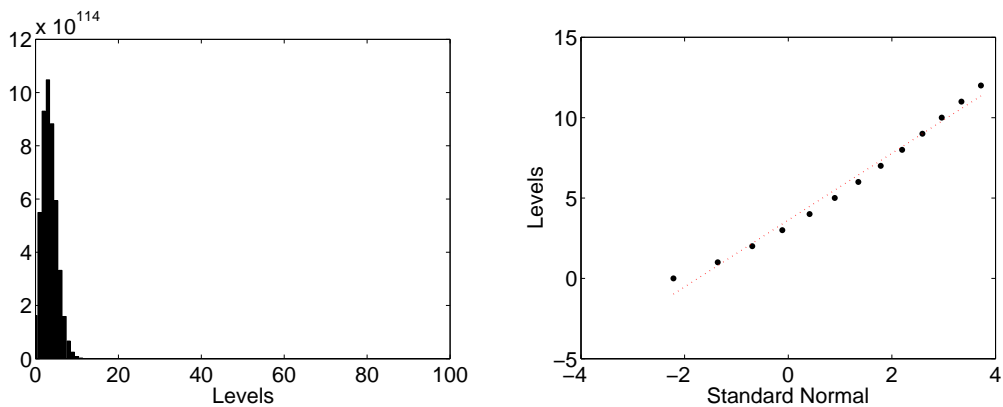

\center
\includegraphics[scale=0.45]{level100.eps}
\includegraphics[scale=0.45]{level100qq.eps}
\caption{Histogram of the level counts for $n=100$ and the associated Q-Q plot.}
\label{fig:levelHistograms}
\end{figure}


A key ingredient of the present paper is a stochastic algorithm for generating
a random set partition due to Stam \cite{stam}. Supplementing this with some
novel probabilistic ideas allows standard ``delta method'' techniques to finish
the job.

Brief reviews of the extensive enumerative, algebraic and probabilistic aspects
of set partitions are in \cite{knuth4a} and \cite{stanley1}.
The book of Mansour \cite{mansour} contains applications to computer science
and much else.  An important paper combining many of the statistics we
work with is \cite{cddsy}.
The companion paper \cite{CDKR} has an extensive review.  It
also summarizes the literature on supercharacters. Briefly, these are natural
characters $\chi_\lambda$ on the uni-upper triangular matrix group $U_n(\F_q)$
which are indexed by set partitions. The representation corresponding to
$\chi_\lambda$ has dimension $q^{d(\lambda)}$.  The (usual) inner product
between $\chi_\lambda$ and $\chi_\mu$ is $<\chi_\lambda, \chi_\mu> =
q^{cr(\lambda)} \delta_{\lambda, \mu}$. This suggests understanding how
$d(\lambda)$ and $cr(\lambda)$ vary for typical set partitions.

There are many codings of a set partition. One needed below codes $\lambda$ as a
sequence $x_1, x_2, \cdots, x_n$ with $x_i = j$ if and only if $i$ is in block
$j$ of $\lambda$. Thus $135|24|6|7$ corresponds to $1, 2, 1, 2, 1, 3, 4$.  If
$a_i = x_i -1$, $a_1, a_2, \cdots , a_n$ is a restricted growth sequence: $a_1
= 0$ and $a_{i+1} \le 1 + \max( a_1, \cdots, a_{i})$ for $1 \le i \le n -1$.
This standard coding is discussed in \cite[page 416]{knuth4a}.  For this coding, let
\begin{equation}\label{eqn:defs2}
L(\lambda) = \abs{ \{ i: x_{i+1} = x_i\}}
\end{equation}
the number of \underline{levels}  of $\lambda$.  This is used
as an example of the present techniques.  See \cite[Chapter 4]{mansour} for further references.

The main theorems proved use $\alpha_n$, the positive real solution of $ue^u =
n+1$ (so $\alpha_n = \log(n) - \log\log(n) + o(1) $ \cite{deBruijn}).  Let
$\Pars(n)$ be the set of partitions of $[n]$.  Throughout, $\lambda$ is
uniformly chosen in $\Pars(n)$.

\begin{theorem}\label{thm:levels} The number of levels $L(\lambda)$ has
$\mu_n^L = \E(L(\lambda)) = (n-1) \frac{B_{n-1}}{B_n}\sim \log(n)$ and
$\( \sigma_n^L\)^2 = \VAR(L(\lambda)) = (n-1) \frac{B_{n-1}}{B_n}
+ n(n-1) \frac{B_{n-2}}{B_{n}} - (n-1)^2 \frac{B_{n-1}^2}{B_n^2} \sim \log(n).$
Normalized by its mean and standard deviation, $L(\lambda)$ has an approximate standard normal distribution
$$\Prob\( \frac{L(\lambda) - \mu_n^L}{\sigma_n^{L}} \le x \) \to \frac{1}{\sqrt{2\pi}} \int_{-\infty}^x e^{-\tau^2/2} d\tau$$
for all fixed $x$ as $n\to \infty$.
\end{theorem}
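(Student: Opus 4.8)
The plan is to exploit Stam's stochastic representation of a uniform set partition, which (as promised in the introduction) realizes $\lambda$ in terms of independent ingredients. In Stam's construction one takes $N$ to be a Poisson-type random variable concentrated near $\alpha_n$, and conditionally on $N$ the coding sequence $x_1,\dots,x_n$ is obtained by drawing $x_1,\dots,x_n$ i.i.d.\ uniform on $\{1,\dots,N\}$ (after an appropriate size-biasing / rejection to land in $\Pars(n)$); the key point is that $L(\lambda) = \#\{i : x_{i+1}=x_i\}$ becomes, conditionally on $N$, a sum of \emph{weakly dependent} indicator variables $\xi_i = \mathbf{1}[x_{i+1}=x_i]$. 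First I would set up this representation precisely, identify the conditioning event, and write $L = \sum_{i=1}^{n-1}\xi_i$. On the event $N\approx \alpha_n$ each $\xi_i$ has mean $\approx 1/N \approx 1/\log n$, so $\E(L\mid N)\approx (n-1)/N$, matching the stated $\mu_n^L\sim\log n$ once one averages over $N$; the second moment input from the companion paper \cite{CDKR} pins down $\mu_n^L$ and $(\sigma_n^L)^2$ exactly and shows $(\sigma_n^L)^2\sim\log n$ as well.

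Next I would prove a conditional CLT for $L$ given $N=m$ with $m$ in a window around $\alpha_n$. The variables $\xi_i$ are not independent (consecutive $\xi_i,\xi_{i+1}$ share the coordinate $x_{i+1}$), but they form a $1$-dependent sequence of uniformly bounded indicators; after the rejection conditioning they remain close to $1$-dependent with negligible correction. I would therefore invoke a standard CLT for sums of $m$-dependent bounded arrays (or, equivalently, a Stein/Lindeberg argument on blocks), checking that the conditional variance grows like $n/m \asymp \log n \to \infty$ and that the third absolute moment is $o$ of the variance to the $3/2$. This yields $(L - \E(L\mid N))/\sqrt{\VAR(L\mid N)} \Rightarrow \mathcal N(0,1)$ conditionally, uniformly for $N$ in the relevant window.

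Finally I would assemble the conditional statement into an unconditional one by a ``delta method'' / mixing argument over the randomness of $N$. Because $N$ concentrates sharply (its fluctuations are $O(1)$ around $\alpha_n$, while the scale of interest $\sigma_n^L\sim\sqrt{\log n}$ is larger), the random centering $\E(L\mid N)$ and scaling $\sqrt{\VAR(L\mid N)}$ can be replaced by the deterministic $\mu_n^L$ and $\sigma_n^L$ with vanishing error; a Taylor expansion of $m\mapsto (n-1)/m$ around $\alpha_n$ controls the shift. One then passes from the rejection-sampled object back to the genuine uniform $\lambda\in\Pars(n)$ via Stam's identity, verifying that the conditioning event has probability bounded below so no distortion of the limit law occurs. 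Combining the conditional CLT with the concentration of $N$ gives $\Prob\big((L(\lambda)-\mu_n^L)/\sigma_n^L \le x\big)\to \Phi(x)$.

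I expect the main obstacle to be the second step: establishing the conditional CLT with \emph{explicit, uniform} control of the error as $N$ ranges over its window, since the dependence among the $\xi_i$ is mild but the rejection conditioning couples all coordinates globally. Showing that this global conditioning perturbs the $1$-dependent structure only negligibly — so that a clean $m$-dependent CLT applies — is the delicate point; once that is in hand, the concentration of $N$ and the moment formulas from \cite{CDKR} make the rest routine.
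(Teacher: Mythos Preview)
Your overall strategy---generate $\lambda$ via Stam's algorithm, prove a conditional CLT given the number of boxes, then average out---is exactly the paper's. However, several concrete details are wrong in ways that matter.

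First, and most importantly, Stam's algorithm involves \emph{no rejection or conditioning step}. One draws $M$ from the measure $\mu_n(m)=m^n/(eB_n\, m!)$ and then drops $n$ labelled balls i.i.d.\ uniformly into $M$ boxes; the resulting set partition is \emph{exactly} uniform on $\Pars(n)$ (this is Stam's theorem). The ``main obstacle'' you flag---global coupling induced by a rejection step---simply does not exist, and the whole second half of your plan is devoted to a phantom difficulty.

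Second, conditionally on $M$ the indicators $\xi_i=\mathbf{1}[x_{i+1}=x_i]$ are not merely $1$-dependent but fully \emph{independent} Bernoulli$(1/M)$ variables: the increments $x_{i+1}-x_i\pmod{M}$ are i.i.d.\ uniform on $\Z/M\Z$. So the conditional CLT is the ordinary de~Moivre--Laplace theorem; no $m$-dependent machinery is needed.

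Third, your scales are off. The random $M$ concentrates near $n/\alpha_n\sim n/\log n$ with standard deviation $\sim\sqrt{n}/\alpha_n$ (Theorem~\ref{thm:hwang}), not near $\alpha_n$ with $O(1)$ fluctuations. Your sentence ``$1/N\approx 1/\log n$ so $\E(L\mid N)\approx(n-1)/N$ matches $\mu_n^L\sim\log n$'' is internally inconsistent: if $N\approx\log n$ then $(n-1)/N\approx n/\log n$, not $\log n$. With the correct scale one gets $n/M=\alpha_n+O_p(n^{-1/2})$, so the random conditional mean $(n-1)/M$ and variance $(n-1)M^{-1}(1-M^{-1})$ can be replaced by the deterministic $\alpha_n$ with negligible error, and the unconditioning is immediate.
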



\begin{theorem}\label{thm:dimension} The dimension index $d(\lambda)$ has
$\mu_n^d = \E(d(\lambda)) = \frac{\alpha_n-2}{\alpha_n} n^2 + O\( \frac{n}{\alpha_n}\)$ and
$\( \sigma_n^d\)^2 = \VAR(d(\lambda)) = \( \frac{\alpha_n^2 - 7\alpha_n + 17}{\alpha_n^3(\alpha_n+1)}\) n^3
+ O\( \frac{n^2}{\alpha_n}\).$
Normalized by its mean and standard deviation, $d(\lambda)$ has an approximate standard normal distribution
$$\Prob\( \frac{d(\lambda) - \mu_n^d}{\sigma_n^{d}} \le x \) \to \frac{1}{\sqrt{2\pi}}  \int_{-\infty}^x e^{-\tau^2/2} d\tau$$
for all fixed $x$ as $n\to \infty$.
\end{theorem}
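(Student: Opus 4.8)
The plan is to exploit Stam's stochastic algorithm for generating a uniform $\lambda \in \Pars(n)$, which represents a random set partition via independent-ish data that is easy to handle. Recall Stam's construction: choose the number of blocks $K$ with $\Prob(K = k) \propto k^n/k!$, then (conditionally) drop $n$ balls independently and uniformly into $k$ labelled boxes, and finally forget the labels. The key point is that $K$ concentrates tightly around $e^{\alpha_n} = (n+1)/\alpha_n$, with fluctuations of order $\sqrt{n}/\alpha_n^{3/2}$ or so; and conditionally on $K=k$, the block-contents are just the fibers of a uniform random function $f\colon [n] \to [k]$. The statistic $d(\lambda) = \sum_i (M_i - m_i + 1)$ must first be rewritten in a form amenable to this picture. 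Writing $d(\lambda) = \sum_{i} (M_i - m_i) + (\#\text{blocks})$, and noting $M_i - m_i = \sum_{j=1}^{n-1} \mathbf{1}[\text{block } i \text{ has an element} \le j \text{ and an element} > j]$, we get
$$
d(\lambda) = K + \sum_{j=1}^{n-1} \#\{\text{blocks straddling the gap between } j \text{ and } j+1\}.
$$
Under Stam's model with $K=k$ fixed and $f$ uniform on $[n]\to[k]$, the number of blocks straddling gap $j$ is $k$ minus the number of colors used only among $\{1,\dots,j\}$ minus the number used only among $\{j+1,\dots,n\}$; this is a sum over the $k$ colors of a simple indicator depending on whether color $c$ appears on both sides of the cut. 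So $d(\lambda)$ becomes, conditionally on $k$, a double sum over gaps $j$ and colors $c$ of bounded indicators, i.e. a U-statistic-like object built from the independent coordinates $f(1),\dots,f(n)$.

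First I would carry out the conditional analysis: fix $k = k(n)$ in the concentration window around $e^{\alpha_n}$, and show that $D_k := d(\lambda) \mid (K=k, f)$ — which is a function of the i.i.d. coordinates $f(1),\dots,f(n)$ — satisfies a CLT with the stated mean and variance asymptotics. The clean way is the ``delta method'' / Hoeffding decomposition: because $D_k$ is a symmetric-ish functional of independent coordinates with bounded differences (changing one $f(i)$ changes $D_k$ by $O(n)$, and more precisely by a telescoping amount), one can write $D_k = \E D_k + \sum_i g_i(f(i)) + (\text{lower-order remainder})$, where the linear part is a sum of independent bounded (by $O(n)$) random variables whose variance is $\Theta(n^3/\alpha_n^3)$, matching $(\sigma_n^d)^2$. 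The remainder (the degree-$\ge 2$ part of the Hoeffding decomposition) must be shown to have variance $o(n^3/\alpha_n^3)$; this is the routine-but-delicate part — it amounts to a second-moment computation on uniform random functions, controlling how pairs of gaps interact, and should fall out of the moment formulas from the companion paper \cite{CDKR} or a direct inclusion-exclusion. With the linear part a sum of independent, uniformly-$o(\sigma_n^d)$-bounded summands, Lindeberg (or Lyapunov with fourth moments) gives conditional asymptotic normality.

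Then I would remove the conditioning on $K$. Since $K$ is concentrated in a window of width $w_n = o(e^{\alpha_n})$ about its mode, and since on that window the conditional mean $\mu_n^d(k) = \E(d \mid K=k)$ varies slowly — its derivative in $k$ times $w_n$ should be $o(\sigma_n^d)$, which needs checking but is plausible given $\mu_n^d \sim n^2(1 - 2/\alpha_n)$ depends on $k$ only through the $\log$-scale quantity $\alpha_n$ — the mixture over $K$ of the conditional Gaussians is itself asymptotically Gaussian with the unconditional mean and variance. Concretely: write $\Prob((d-\mu_n^d)/\sigma_n^d \le x) = \sum_k \Prob(K=k)\,\Prob((d - \mu_n^d)/\sigma_n^d \le x \mid K=k)$, restrict to the good window at cost $o(1)$, approximate each conditional probability by $\Phi((x\sigma_n^d + \mu_n^d - \mu_n^d(k))/\sigma_n^d(k))$, and use $\mu_n^d(k) \approx \mu_n^d$, $\sigma_n^d(k)\approx \sigma_n^d$ uniformly on the window to collapse the sum to $\Phi(x)$. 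The mean and variance formulas themselves, $\mu_n^d = \frac{\alpha_n-2}{\alpha_n}n^2 + O(n/\alpha_n)$ and $(\sigma_n^d)^2 = \frac{\alpha_n^2 - 7\alpha_n+17}{\alpha_n^3(\alpha_n+1)}n^3 + O(n^2/\alpha_n)$, come from \cite{CDKR} (or are rederived by combining $\E(d\mid K=k)$, $\VAR(d\mid K=k)$ with the law of $K$ via the law of total variance).

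The main obstacle I anticipate is the de-conditioning step combined with the variance bookkeeping: one must be honest about the width $w_n$ of the $K$-window versus the sensitivity of $\mu_n^d(k)$ to $k$. If $\mu_n^d(k)$ moved by as much as $\sigma_n^d$ across the typical fluctuation of $K$, the limit would be a Gaussian convolved with the (rescaled) limit law of $K$, not a pure Gaussian — so the crux is a quantitative estimate showing $|\partial_k \mu_n^d(k)| \cdot w_n = o(\sigma_n^d)$, i.e. that $d$'s fluctuations are driven by the ``within-$f$'' randomness rather than by the randomness in the number of blocks. Establishing this cleanly — ideally by the law of total variance, exhibiting $\VAR(d) = \E[\VAR(d\mid K)] + \VAR(\E[d\mid K])$ with the second term genuinely lower order — is where the novel probabilistic input is needed, and it is the step I would write most carefully.
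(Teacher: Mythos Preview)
Your plan contains a genuine gap, and it is precisely at the point you flag as ``the crux.'' You propose to show that the within-$f$ randomness dominates and that the contribution of the fluctuations in $K$ is negligible, i.e.\ that $|\partial_k \mu_n^d(k)|\cdot w_n = o(\sigma_n^d)$ and that $\VAR(\E[d\mid K])$ is lower order. This is false; the situation is exactly the reverse. Conditionally on $K=m$ one has $\E(d\mid m)\approx nm-2m^2$, so $\partial_m \E(d\mid m)\approx n-4m\sim n$; the fluctuations of $K$ are of order $\sqrt{n}/\alpha_n$ (your estimate $\sqrt{n}/\alpha_n^{3/2}$ is off by $\sqrt{\alpha_n}$), giving $\VAR(\E[d\mid K])\sim n\cdot n/\alpha_n^2 \cdot n = n^3/\alpha_n^2$. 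This matches $(\sigma_n^d)^2\sim n^3/\alpha_n^2$. Meanwhile the conditional variance $\VAR(d\mid K=m)$ is only of order $m^3\sim n^3/\alpha_n^3$, smaller by a factor $\alpha_n$. So your conditional CLT, even if established, would be at the wrong scale: normalized by $\sigma_n^d$, the conditional distribution of $d-\mu_n^d(k)$ degenerates to a point mass, and the deconditioning step as you describe it cannot recover a nondegenerate Gaussian.

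The paper proceeds in the opposite direction. It shows (via a coupon-collector / waiting-time representation of $\sum m_i$ and $\sum M_i$) that conditionally on $M=m$ in the relevant window one has $d(\lambda)=nm-2m^2+O_p(m^{3/2})$, with no conditional CLT needed. The normal limit then comes entirely from the CLT for $M$ (Hwang's theorem) together with the delta method applied to the smooth function $m\mapsto nm-2m^2$; the $O_p(m^{3/2})=O_p((n/\alpha_n)^{3/2})$ error is absorbed because it is $o(\sigma_n^d)$. In the law-of-total-variance decomposition, it is $\VAR(\E[d\mid K])$ that carries the leading term, not $\E[\VAR(d\mid K)]$.
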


\begin{theorem}\label{thm:crossing} The number of crossings $cr(\lambda)$ has
$\mu_n^{cr} = \E(cr(\lambda)) = \frac{2\alpha_n-5}{4\alpha_n^2} n^2 + O\( \frac{n}{\alpha_n}\)$ and
$\( \sigma_n^{cr}\)^2 = \VAR(cr(\lambda)) =  \frac{3\alpha_n^2 - 22\alpha_n + 56}{9\alpha_n^3(\alpha_n+1)} n^3
+ O\( \frac{n^2}{\alpha_n}\).$
Normalized by its mean and standard deviation, $cr(\lambda)$ has an approximate standard normal distribution
$$\Prob\( \frac{cr(\lambda) - \mu_n^{cr}}{\sigma_n^{cr}} \le x \) \to \frac{1}{\sqrt{2\pi}}  \int_{-\infty}^x e^{-\tau^2/2} d\tau$$
for all fixed $x$ as $n\to \infty$.
\end{theorem}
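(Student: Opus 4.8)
The plan is to use Stam's stochastic representation of a uniform set partition of $[n]$, which realizes the block-assignment sequence $x_1, \ldots, x_n$ as follows: one generates i.i.d.\ (or nearly i.i.d.) Poisson-type random variables valued in $\{1, 2, \ldots\}$ conditioned on an appropriate total, with the number of nonempty blocks concentrated around $\alpha_n \sim \log n$. Concretely, fix a parameter (essentially $\alpha_n$), let $N$ be Poisson with mean $e^{\alpha_n}$, assign each of $1, \ldots, n$ independently a uniform label in $[N]$, and condition on the event that all $n$ labels are ``used'' by the first $\alpha_n$-ish blocks in the size-biased order; Stam's identity says the resulting set partition is exactly uniform on $\Pi(n)$. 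The point of this representation is that $cr(\lambda)$ becomes a function of a collection of nearly independent coordinates, so that a Lindeberg/Lyapunov CLT for sums of weakly dependent variables — the ``delta method'' alluded to in the introduction — can be brought to bear.

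The key steps, in order, would be: (1) Write $cr(\lambda)$ in the Stam model as a statistic of the label sequence. A crossing is a quadruple $i<i'<j<j'$ with $i,j$ consecutive in their block and $i',j'$ consecutive in their block; I would first re-express ``consecutive in a block'' in terms of the label sequence $x_1,\ldots,x_n$ (the next occurrence of a given label), turning $cr$ into a sum over pairs of arcs of an indicator that depends only on finitely many labels and the relative order of the arc endpoints. (2) Pass from the conditioned Stam model to an unconditioned one, controlling the conditioning event (that exactly the right set of labels appears, or that the label count equals its typical value) by showing it has probability bounded below by a negative power of a slowly growing function, so that a CLT in the unconditioned model transfers. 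This is where the companion paper's exact moment formulas (explicit linear combinations of Bell numbers) are used as a cross-check and to pin down $\mu_n^{cr}$ and $\sigma_n^{cr}$ to the stated asymptotics $\frac{2\alpha_n-5}{4\alpha_n^2}n^2$ and $\frac{3\alpha_n^2-22\alpha_n+56}{9\alpha_n^3(\alpha_n+1)}n^3$. (3) In the unconditioned model, decompose $cr(\lambda)$ according to which block each arc lies in; conditionally on the block sizes, the within-block arc structure is determined by independent uniform orderings, so $cr$ is a sum of a bounded number (about $\binom{\alpha_n}{2} \approx (\log n)^2$ many) of independent contributions, one for each ordered pair of blocks, plus within-block diagonal terms. (4) Apply a CLT for triangular arrays (Lyapunov's condition) to this sum, checking that no single block-pair contribution dominates — the dominant blocks have size $\Theta(n/\alpha_n)$, each contributing variance of order $(n/\alpha_n)^3$, and there are $\Theta(\alpha_n^2)$ of them, giving total variance $\Theta(n^3/\alpha_n)$ as required, with third moments of lower order in the normalization.

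The main obstacle I expect is step~(3)--(4): controlling the dependence. Even in the unconditioned Stam model the block sizes are not independent of each other, and $cr(\lambda)$ couples different blocks through the interleaving condition $i<i'<j<j'$, so the ``sum of independent pieces'' picture requires conditioning on the vector of block sizes \emph{and} on the coarse interleaving pattern of the blocks, then showing the conditional distribution of the centered, normalized statistic is asymptotically Gaussian uniformly over the typical configurations, and finally that the fluctuations of the conditioning data themselves contribute only lower-order (Gaussian-compatible) variance. Handling this cleanly — rather than through a brute-force moment computation, which the authors explicitly found intractable — is the crux; I would try to isolate a single ``leading'' pair of large blocks whose arc-interleaving count is already asymptotically normal (a two-sample $U$-statistic / inversion-type count between two independent uniform orderings), show it carries the full order of the variance, and treat everything else as a negligible perturbation controlled by the second-moment bounds from \cite{CDKR}.
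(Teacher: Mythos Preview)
Your proposal has two serious problems.

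First, the scales are inverted throughout. You write that ``the number of nonempty blocks [is] concentrated around $\alpha_n \sim \log n$'', that there are ``about $\binom{\alpha_n}{2} \approx (\log n)^2$'' block-pairs, and that ``the dominant blocks have size $\Theta(n/\alpha_n)$''. In fact the number of blocks of a uniform set partition concentrates at $n/\alpha_n \sim n/\log n$, and the typical block has size $\alpha_n \sim \log n$; this is exactly the content of Theorem~\ref{thm:hwang} (in Stam's algorithm $M \sim n/\alpha_n$ boxes each receiving about $\alpha_n$ balls). So there are $\Theta\bigl((n/\log n)^2\bigr)$ block-pairs, all of comparable size. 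In particular no ``single leading pair of large blocks'' carries the variance, and your final reduction step cannot work as stated.

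Second, and more fundamentally, even with the scales corrected the block-pair decomposition is not how the paper obtains independence, and you correctly identify that dependence would be the crux of your route. The paper avoids it entirely by a different representation: sample $\lambda$ conditionally on the set of block minima and block maxima (inherited from an auxiliary Stam partition $\mu$) via a sequential insertion of $1,2,\ldots,n$, placing each non-minimum $k$ into a uniformly chosen currently-open block. Under this description the number of crossings with second arc endpoint $j=k$ is a random variable $X_k$ uniform on $\{0,1,\ldots,a_k-1\}$, where $a_k$ is the number of open blocks at step $k$, and the $X_k$ are \emph{independent} given the min/max data. Thus $cr(\lambda)=\sum_k X_k$ is conditionally a genuine sum of independent bounded variables. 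The conditional mean $\sum_k (a_k-1)/2$ is expressible through $\sum_i (M_i-m_i)$ and hence through the dimension statistic, so its fluctuations are controlled by the analysis already done for Theorem~\ref{thm:dimension}; combining this with Hwang's CLT for $M$ finishes the argument. This sequential-insertion representation is the ``novel probabilistic idea'' flagged in the introduction, and nothing in your proposal substitutes for it.
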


Section \ref{sec:stam} of this paper explains Stam's algorithm and shows how it
gives a useful heuristic picture of what a random set partition ``looks like''.
The limit theorem for levels is proved in Section
\ref{sec:levels_descents} as a simple illustration of our proof technique.  The
dimension index and number of crossings require further ideas.  They are given
separate proofs in Sections \ref{sec:dimension} and \ref{sec:crossing}.

\section*{Notation} Throughout, we use the stochastic order symbols $O_p$ and
$o_p$.  If $X_n$ for $1 \le n < \infty$ is a sequence of real valued random
variables and $a_n$ is a sequence of real numbers, write $X_n = O_p(a_n)$ if
for every $\epsilon> 0$ and some $\eta>0$, which may depend on $\epsilon$,
 there is $N$ so that $\Prob\{
\abs{X_n} \le \eta \abs{a_n} \} > 1-\epsilon$ for all $n > N$.  Write $X_n =
o_p(a_n)$ if for every $\epsilon >0$ and $\eta>0$ there is $N$ so that $\Prob\{
\abs{X_n} < \eta \abs{a_n}\} > 1-\epsilon$ for all $n >N$.  For background,
examples and many variations see Pratt \cite{pratt}, Lehman \cite{lehman}, or
Serfling \cite{serfling}.  We say two sequences of random variables are weak star
close if their distributions are close in L\'evy metric.

\section{Stam's algorithm and set partition heuristics}\label{sec:stam}
Write $\Pars(n)$ for the set partitions of $[n] = \{1, 2, \cdots, n\}$ and $B_n
= \abs{\Pars(n)}$ for the $n$th Bell number (sequence A000110 of Sloane's
\cite{sloane}).  To help evaluate asymptotics it is helpful to have
\begin{align*}\label{eqn:bell_asymp}
& \frac{B_{n+1}}{B_n} = \frac{n}{\alpha_n} + \frac{1}{2} \frac{\alpha_n}{(1+\alpha_n)^2}  + O\( \frac{\alpha_n}{n}\) \\
& \frac{B_{n+k}}{B_n} = \frac{(n+k)!}{n! \alpha_n^k}\( 1+ O\( \frac{k}{n \alpha_n}\)\) \\
& \frac{\alpha_{n+k}}{\alpha_n}  =  1 + O\( \frac{k}{n\log(n)}\)
\end{align*}
which are valid for fixed $k$ as $n\to \infty$.
See, for instance, \cite{deBruijn}.  Dobinski's identity
\cite{dobinski, pitman1}
\begin{equation}
B_n = \frac{1}{e} \sum_{m=1}^\infty \frac{m^n}{m!}
\end{equation}
shows that for fixed $n \in \{ 0, 1, 2, \cdots\}$
\begin{equation}\label{eqn:measure}
\mu_n(m) = \frac{1}{e B_n} \frac{m^n}{m!}
\end{equation}
is a probability measure on $\{1, 2, 3, \cdots \}$. Stam \cite{stam} uses this
measure to give an elegant algorithm for choosing a uniform random element of
$\Pars(n)$.

\begin{center}
\underline{Stam's Algorithm}
\end{center}
\begin{enumerate}
\item Choose $M$ from $\mu_n$.
\item Drop $n$ labelled balls uniformly into $M$ boxes.
\item Form a set partition $\lambda$ of $[n]$ with $i$ and $j$ in the same block if and only if balls $i$ and $j$ are
in the same box.
\end{enumerate}
Of course, after choosing $M$ and dropping  balls, some of the boxes may be
empty. Stam \cite{stam} shows that the number of empty boxes has (exactly) a
Poisson distribution and is independent of the generated set partition. This
implies that the number of boxes $M$ drawn from $\mu_n$ at
\eqref{eqn:measure} has the same limiting distribution as the number of blocks
in a random $\lambda \in \Pars(n)$. This is a well studied random variable.  It
will emerge that the fluctuations of $M$ are the main source of randomness in
Theorems \ref{thm:levels} -- \ref{thm:crossing}. Results of Hwang \cite{hwang}
prove the following normal limit theorem (Hwang also has an error estimate).
\begin{theorem}\label{thm:hwang}
For $M$ chosen from $\mu_n$ of \eqref{eqn:measure}, as $n\to\infty$
$$\mu_n^M := \E(M) = \frac{B_{n+1}}{B_n} = \frac{n}{\alpha_n} + O\(\frac{1}{\alpha_n}\)$$
and
$$\(\sigma_n^M\)^2:= {\rm VAR}(M)  = \frac{B_{n+2}}{B_n} - \frac{B_{n+1}^2}{B_n^2} = \frac{n}{\alpha_n^2}  + O\(\frac{n}{\alpha_n^3}\).$$
Normalized by its mean and standard deviation, $M$ has an approximate standard normal distribution.
\end{theorem}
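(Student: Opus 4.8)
The plan is to realize $M$ as a sum of independent, uniformly bounded summands and then invoke a classical central limit theorem. First I would read off the moments of $M$ from Dobinski's identity: since $\sum_{m\ge1}m^n/m!=eB_n$, for every positive integer $k$,
$$\E(M^k)=\frac{1}{eB_n}\sum_{m\ge1}m^k\cdot\frac{m^n}{m!}=\frac{1}{eB_n}\sum_{m\ge1}\frac{m^{n+k}}{m!}=\frac{B_{n+k}}{B_n}.$$
Taking $k=1,2$ gives the exact formulas $\mu_n^M=B_{n+1}/B_n$ and $(\sigma_n^M)^2=B_{n+2}/B_n-(B_{n+1}/B_n)^2$, and feeding in the Bell number asymptotics recorded above yields $\mu_n^M=n/\alpha_n+O(1/\alpha_n)$ and $(\sigma_n^M)^2=n/\alpha_n^2+O(n/\alpha_n^3)$. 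The only delicate point here is that $(\sigma_n^M)^2$ is a small difference of two quantities of order $n^2/\alpha_n^2$, so one must expand $B_{n+k}/B_n$ precisely enough to see that the leading terms cancel, leaving a residual of the claimed order $n/\alpha_n^2$; in particular $\sigma_n^M\to\infty$, which is the crucial input below.

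Next I would compute the probability generating function of $M$. Starting from the classical identity $\sum_{m\ge0}m^n z^m/m!=e^z B_n(z)$, where $B_n(z)=\sum_k S(n,k)z^k$ is the Touchard (Bell) polynomial with $B_n(1)=B_n$, one obtains the clean formula
$$\E(z^M)=\frac{e^z B_n(z)}{eB_n(1)}=e^{z-1}\,\frac{B_n(z)}{B_n(1)}.$$
By Harper's theorem \cite{harper}, $B_n(z)$ has only real, non-positive zeros; writing these as $-r_1,\dots,-r_n$ with $r_j\ge0$, it factors as $B_n(z)/B_n(1)=\prod_{j=1}^{n}\frac{z+r_j}{1+r_j}$, and each factor is the probability generating function of a Bernoulli variable with success probability $1/(1+r_j)$. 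Since $e^{z-1}$ is the probability generating function of a Poisson$(1)$ variable, this exhibits $M$ as equal in distribution to $N+\sum_{j=1}^{n}\xi_j$, with $N\sim\mathrm{Poisson}(1)$ and $\xi_j\sim\mathrm{Bernoulli}\bigl(1/(1+r_j)\bigr)$ all independent. (The independent Poisson summand is precisely Stam's observation that the number of empty boxes is Poisson and independent of the generated set partition.)

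Finally I would apply the Lyapunov central limit theorem for triangular arrays to this decomposition. Each Bernoulli summand satisfies $|\xi_j-\E\xi_j|\le1$, hence has third absolute central moment at most $\VAR(\xi_j)$, while the single Poisson$(1)$ summand contributes a bounded third absolute central moment $c$; therefore the Lyapunov ratio is at most $\bigl(\VAR(M)-1+c\bigr)/\VAR(M)^{3/2}=O\bigl(\VAR(M)^{-1/2}\bigr)$, which tends to $0$ since $\sigma_n^M\to\infty$ by the first step. Hence $(M-\mu_n^M)/\sigma_n^M$ converges in distribution to the standard normal, as claimed.

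The single genuinely non-elementary ingredient is Harper's real-rootedness of the Bell polynomials: this is exactly what converts the combinatorial random variable $M$ into a sum of independent bounded summands and makes the elementary central limit theorem apply, and I would simply cite it rather than reprove it. If one insisted on a self-contained argument, the main obstacle becomes analytic, namely Hwang's route: a saddle-point and singularity analysis of the bivariate generating function $\sum_n B_n(z)x^n/n!=e^{z(e^x-1)}$ (equivalently, a quasi-powers theorem applied to $\E(z^M)$ as $n\to\infty$), which additionally yields the Berry--Esseen refinement alluded to in the statement. The remaining steps --- the moment computation and the verification of Lyapunov's condition --- are entirely routine.
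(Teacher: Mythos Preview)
Your argument is correct. Note, however, that the paper does not actually prove this theorem: it simply quotes it from Hwang \cite{hwang}, whose proof goes through the analytic quasi-powers machinery applied to the bivariate exponential generating function $e^{z(e^x-1)}$ and delivers a Berry--Esseen rate as a bonus. Your route is genuinely different and more probabilistic: you recognize $\E(z^M)=e^{z-1}B_n(z)/B_n(1)$, invoke Harper's real-rootedness of the Bell polynomial to split $M$ in law as a Poisson$(1)$ plus $n$ independent Bernoullis, and then Lyapunov finishes the job once $\sigma_n^M\to\infty$. This is essentially Harper's own 1967 argument for the number of blocks, transported to $M$ via Stam's observation that $M$ differs from the block count by an independent Poisson$(1)$; it is shorter and more transparent than the analytic approach, at the cost of importing the real-rootedness lemma and of not yielding any convergence rate. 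Two cosmetic points: the reference \texttt{harper} is not in the paper's bibliography, so you would need to add it; and the claim that the third absolute central moment of a Bernoulli is at most its variance is the inequality $p(1-p)\bigl((1-p)^2+p^2\bigr)\le p(1-p)$, which you might state explicitly.
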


\noindent {\bf Heuristic I.}  Stam's algorithm gives a useful intuitive way to
think about a random element of $\Pars(n)$. It behaves practically the same as
a uniform multinomial allocation of $n$ labelled balls into $m = n/\log(n)$
boxes. The arguments in the following sections make this precise.  It appears
to us that many of the features previously treated in the beautiful paper of
Fristedt \cite{fristedt} can be treated by the present approach.  Note that
Fristedt treated features that only depend on block sizes (largest, smallest,
number of boxes of size $i$).  None of our statistics have this form.

\noindent {\bf Heuristic II.}  Fristedt's arguments randomize $n$. This makes
the block variables, $N_i(\lambda) = \# \text{ blocks of size } i$, independent
allowing standard probability theorems to be used.  At the end, a Tauberian
argument (dePoissonization) is used to show that the theorems hold for fixed
$n$.  The present argument fixes $n$ and randomizes the number of blocks.  This
results in a ``balls in boxes'' problem with many tools available. At the end,
an Abelian argument shows that the appropriate limit theorem holds when $m$
fluctuates.  See \cite{hardy} for background on this use of Abelian and
Tauberian theorems.  There are many variants of Poissonization in active use.
We do not see how to abstract Stam's algorithm to other combinatorial
structures.

We conclude this section with a simple illustration of Stam's algorithm.  From
\eqref{eqn:measure}, $\mu_n(m) = \frac{1}{eB_n} \frac{m^n}{m!}$  is a
probability measure on $\{ 1, 2, 3, \cdots \}$.  Thus for $-n< d < \infty$
\begin{equation}
\E_n(M^d) = \frac{1}{e B_n} \sum_{m=1}^\infty \frac{m^{n+d}}{m!} = \frac{B_{n+d}}{B_n}.
\end{equation}
Let us apply this to compute the moments for $L(\lambda)$, the number of levels
of $\lambda \in \Pars(n)$.  From the definition \eqref{eqn:defs2},  given $M$,
$L(\lambda) = X_1 + \cdots + X_{n-1}$ where
$X_i$ is the indicator random variable of the event that balls $i$ and $i+1$ are dropped into the same box. By
inspection, the $X_i$ are independent with $\Prob(X_i = 1) = \frac{1}{M}$.
Thus
\begin{equation}\label{eqn:levels_mean}
\E_n (L(\lambda)) = \E_n \E \( L(\lambda | M) \) = \E_n \( \frac{n-1}{M} \)  = (n-1) \frac{B_{n-1}}{B_n}.
\end{equation}
The standard identity
$$\VAR(Z) = \E(\VAR(Z|W)) + \VAR(\E(Z|W))$$
for any random variables $Z$ and $W$ such that the moments exist, shows that
\begin{equation}\label{eqn:levels_variance}
\VAR_n(L(\lambda)) = (n-1) \frac{B_{n-1}}{B_n} + n(n-1) \frac{B_{n-2}}{B_{n}} - (n-1)^2 \frac{B_{n-1}^2}{B_n^2}.
\end{equation}

More generally, this provides an alternative approach to \cite{CDKR} for
showing that the moments of statistics $T(\lambda)$ are shifted Bell
polynomials. It requires $\E_n( T(\lambda)  | m)$ to be a Laurent polynomial in
$m$.  As an example, Stam worked with $W_i(\lambda)$, the size of the block in
$\lambda$ containing $i$, $1 \leq i \leq n$.  Then, any polynomial in the
$\{W_i\}_{i=1}^n$ has expectation a shifted Bell polynomial; for example,
$W_i^k$ and $W_i W_j$.  Stam proves that $W_i$ is approximately normal.

\section{Proof of Theorem \ref{thm:levels} }\label{sec:levels_descents}
Theorem \ref{thm:levels} is proved here as a simple illustration of our technique.
Conditioning on $M$ in Stam's algorithm, classical ``balls in bins'' central limit theorems
are used to prove the limiting normality uniformly in $M$ and standard $\delta$-method arguments
are used to complete the proof.

\begin{proof}[Proof of Theorem \ref{thm:levels}]
The moments  of the level statistic $L(\lambda)$ are computed in
\eqref{eqn:levels_mean} and \eqref{eqn:levels_variance}.  Conditional on $M$, $L(\lambda) =
X_1 + \cdots + X_{n-1}$ with $X_i$ independent identically distributed binary variables with
 $\Prob(X_i = 1)  = 1/M$.
Thus conditioned on $M$,
$$\E (L(\lambda) \mid M) = \frac{n-1}{M}, \ \ \ \text{ and } \ \ \ \VAR(L(\lambda) \mid M) = \frac{n-1}{M} \( 1- \frac{1}{M}\).$$
and, normalized by its conditional mean and variance, $L(\lambda)$ has a standard normal limiting
distribution provided $n/M\to \infty$.  In the present case,
$M = M_n$ is a random variable. From Theorem \ref{thm:hwang},
as $n$ tends to infinity
\begin{equation}\label{eqn:normal_limit}
\frac{M_n - \mu_n^M}{\sigma_n^M} \to N(0, 1) \text{ with } \mu_n^M \sim \frac{n}{\alpha_n}, (\sigma_n^M)^2 \sim \frac{n}{\alpha_n^2}.
\end{equation}
This implies
\begin{equation}\label{eqn:n/m_n_1}
\frac{n}{M_n} = \alpha_n + O_p\( \frac{1}{\sqrt{n}}\).
\end{equation}
To be precise,
write $M_n = \mu_n^M + Z_n \sigma_n^M$ with $Z_n = \frac{M_n - \mu_n^M}{\sigma_n^M}$.  Then
\begin{equation}\label{eqn:n/m_n}
\frac{n}{M_n} = \frac{n}{\mu_n^M + Z_n \sigma_n^M} = \frac{n}{\mu_n^M \( 1 + \frac{Z_n}{\sigma_n^M} \mu_n^M\)} = \frac{n}{\mu_n}
  \( 1+ \frac{Z_n}{\mu_n^M} \sigma_n^M + O\( \(\frac{Z_n \sigma_n^M}{\mu_n^M}\)^2 \)\).
\end{equation}
From Theorem \ref{thm:hwang}, $n/\mu_n^M = \alpha_n + O(\alpha_n/n)$, $\sigma_n^M/\mu_n^M = O(1/\sqrt{n})$.  Since
$Z_n = O_p(1)$, \eqref{eqn:n/m_n_1} follows.

Thus, with probability close to 1 with respect to $M$
we have that $L(\lambda)$ conditioned on $M$ is weak star close to a Gaussian with mean
$$
\mu^M =\frac{n-1}{M} = \alpha_n +O_p(n^{-1/2})
$$
and standard deviation
$$
\sigma^M = \sqrt{\frac{n-1}{M}\left(1-\frac{1}{M} \right)} = \sqrt{\alpha_n} + O_p(n^{-1/2}).
$$
Thus, with high probability over $M$, the conditional distribution on $L(\lambda)$ is
weak star close
to $N(\alpha_n,\sqrt{\alpha_n})$. Therefore, the overall distribution of $L(\lambda)$
is also close to this normal distribution.

\end{proof}

\section{Proof of Theorem \ref{thm:dimension}}\label{sec:dimension}
In outline, the proof proceeds by choosing a random $\lambda \in \Pars(n)$
using Stam's algorithm.  Conditioning on the chosen $m$ reduces the problem to
a slightly non-standard balls in boxes problem.  Given $m$, it is shown that
$d(\lambda) = nm -2m^2 + O_p(m^{3/2})$
so that the fluctuations in
$d(\lambda)$ are driven by the fluctuations in $m$.  These are asymptotically
normally distributed with mean and variance $\( \frac{n}{\alpha_n},
\frac{n}{\alpha_n^2}\)$.  From Theorem \ref{thm:hwang} above, a simple
averaging argument completes the proof.  The first proposition treats the balls
in boxes argument.  It proves more than is needed.  The argument is useful for
statistics such as $T(\lambda) = \sum_i M_i$ where the sum runs over the blocks
of $\lambda$ indexed by $i$ and $M_i$ is the maximum element in the $i$th
block.

The first step in the proof is to prove the appropriate approximation
conditional on $m$.  While it would be of interest to explore this for general
$n$, $m$, we content ourselves with proving what is needed for Theorem
\ref{thm:dimension}. From Theorem \ref{thm:hwang} the relevant values of $m$
are $\frac{n}{\alpha_n} + \frac{c \sqrt{n}}{\log(n)}$ for large fixed values of
$c$.  This explains the choice in the next lemma.
\begin{lemma}\label{lem:balls_in_bins}
Fix a large number $C$.
Let $n$ balls labeled $1, 2, \cdots, n$ be dropped uniformly at random into $m$
boxes with $m = \frac{n}{\alpha_n} + \frac{c\sqrt{n}}{\log(n)}$.  For $\abs{c}
\le C$.  Let $$D_n = \sum_{i=1}^m \( M_i - m_i+1\)$$ with $M_i$ the maximum
label in box $i$ and $m_i$ the minimum label of box $i$. $M_i - m_i$ is omitted
if box $i$ is empty.  Then $D_n = nm -2m^2 + O_{p, C}(m^{3/2})$ uniformly in $\abs{c}
\le C$.
\end{lemma}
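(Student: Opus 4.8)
The plan is to reduce the span statistic $D_n$ to a sum of occupancy counts and then run a $\delta$-method argument together with Chebyshev's inequality. Work conditionally on the allocation of the $n$ balls, set $q = 1 - 1/m$, and for $0 \le k \le n$ let $N_k$ be the number of boxes hit by balls $1, \dots, k$ (so $N := N_n$ is the total number of occupied boxes); let $\widetilde N_k$ be the analogous count for the last $k$ balls $n, n-1, \dots, n-k+1$. The identity I would establish is
\[
D_n \;=\; S + \widetilde S - (n-2)\,N, \qquad S := \sum_{k=1}^{n-1} N_k, \quad \widetilde S := \sum_{k=1}^{n-1}\widetilde N_k .
\]
To prove it, write $M_i - m_i = \#\{\ell : 1 \le \ell \le n-1,\ m_i \le \ell < M_i\}$; a nonempty box $i$ satisfies $m_i \le \ell < M_i$ exactly when it contains a ball $\le \ell$ and a ball $> \ell$, and by inclusion--exclusion the number of such boxes at a given level $\ell$ is $N_\ell + \widetilde N_{n-\ell} - N$ (every occupied box is hit by some ball $\le \ell$ or some ball $> \ell$). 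Summing over $\ell$ and adding $N$ (one $+1$ per block) gives the identity. Since the balls are exchangeable, $\widetilde S$ has the same distribution as $S$, so the whole problem reduces to one sum of occupancy counts.

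Next I would compute the mean. The occupancy formula $\E N_k = m(1 - q^k)$ and a geometric sum give $\E S = nm - m^2 + m^2 q^n$ and $\E N = m(1-q^n)$, hence
\[
\E D_n \;=\; 2\,\E S - (n-2)\,\E N \;=\; nm - 2m^2 + 2m + \bigl(nm + 2m^2 - 2m\bigr)q^n .
\]
The remaining point is that $q^n$ is small. Since $\alpha_n \sim \log n$, the hypothesis gives $m = \tfrac{n}{\alpha_n}\bigl(1 + O_C(n^{-1/2})\bigr)$, so $n/m = \alpha_n + O_C(\alpha_n n^{-1/2})$ and $q^n = e^{-n/m + O(n/m^2)} = e^{-\alpha_n}\bigl(1 + o_C(1)\bigr)$; by $\alpha_n e^{\alpha_n} = n+1$ this equals $\tfrac{\alpha_n}{n}\bigl(1 + o_C(1)\bigr)$. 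Therefore $\bigl(nm + 2m^2\bigr)q^n = O_C(n)$, and since $m \asymp n/\log n$ makes $n = o\bigl(m^{3/2}\bigr)$, we obtain $\E D_n = nm - 2m^2 + O_C(n) = nm - 2m^2 + o_C\bigl(m^{3/2}\bigr)$.

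For the fluctuations it suffices to bound $\VAR(D_n)$. By the identity and the triangle inequality in $L^2$, $\sqrt{\VAR(D_n)} \le 2\sqrt{\VAR(S)} + (n-2)\sqrt{\VAR(N)}$. Writing $\VAR(N_k) = m q^k(1-q^k) + m(m-1)\bigl((1-2/m)^k - q^{2k}\bigr)$ and noting the second term is $\le 0$ because $1 - 2/m < q^2$, one gets the clean bound $\VAR(N_k) \le m q^k$. Hence, using $\abs{\mathrm{Cov}(N_k,N_l)} \le \sqrt{\VAR(N_k)\,\VAR(N_l)}$,
\[
\VAR(S) \;\le\; \Bigl(\sum_{k=1}^{n-1}\sqrt{\VAR(N_k)}\Bigr)^{2} \;\le\; \Bigl(\sqrt{m}\sum_{k\ge 1} q^{k/2}\Bigr)^{2} \;=\; m\Bigl(\tfrac{\sqrt q}{1-\sqrt q}\Bigr)^{2} \;\le\; 4m^{3},
\]
since $1 - \sqrt q \ge \tfrac{1}{2m}$. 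Likewise $\VAR(N) = \VAR(N_n) \le m q^n = O_C(1)$, so $(n-2)\sqrt{\VAR(N)} = O_C(n)$, whence $\sqrt{\VAR(D_n)} = O_C\bigl(m^{3/2}\bigr)$. Since $\E\abs{D_n - \E D_n} \le \sqrt{\VAR(D_n)}$, Markov's inequality gives $D_n - \E D_n = O_{p,C}\bigl(m^{3/2}\bigr)$, and with the mean computation, $D_n = nm - 2m^2 + O_{p,C}\bigl(m^{3/2}\bigr)$.

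I expect the only delicate point to be the bound $\VAR(S) = O(m^3)$: estimating $\mathrm{Cov}(N_k,N_l)$ crudely for $k \ne l$ loses a factor of $m$ and yields only $O(m^4)$. Routing through Cauchy--Schwarz together with the exact formula for $\VAR(N_k)$ — in effect exploiting that $1-2/m$ and $(1-1/m)^2$ differ only by $1/m^2$, so distinct occupancy indicators are only very weakly negatively correlated — is what produces the sharp exponent. All estimates above hold uniformly for $m$ in the stated range with constants depending only on $C$, which yields the claimed uniformity in $\abs{c}\le C$.
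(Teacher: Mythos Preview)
Your proof is correct and takes a genuinely different route from the paper. The paper represents the sum of block minima via the coupon-collector waiting times $W_1,\ldots,W_m$ (and the block maxima via a time-reversed copy $\widetilde W_1,\ldots,\widetilde W_m$), computes $\E(\sum_i W_i)=m^2$ and $\VAR(\sum_i W_i)\sim m^3/2$, and then pays for the fact that the actual sum only runs up to $L_n=m-E_n$ by showing that the number $E_n$ of empty boxes is stochastically bounded (approximately Poisson(1)) and that the tail $\sum_{i>L_n}(\widetilde W_i-W_i)$ is $O_p(n)$. Your approach instead rewrites $D_n$ exactly as $S+\widetilde S-(n-2)N$ via the level-count identity $A_\ell=N_\ell+\widetilde N_{n-\ell}-N$, then bounds $\VAR(S)$ by exploiting the negative correlation of occupancy indicators (so $\VAR(N_k)\le mq^k$) and Cauchy--Schwarz on the covariances. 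The trade-off is that the paper's decomposition into independent geometric summands makes the variance computation immediate but forces a separate empty-box argument, whereas your identity absorbs the empty boxes automatically at the price of having to control dependent occupancy variables; the key observation $1-2/m<(1-1/m)^2$ is what makes this work cleanly. Both arguments arrive at the same $O(m^{3})$ variance and hence the same $O_{p,C}(m^{3/2})$ fluctuation.
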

\begin{proof}
Consider an infinite supply of balls labelled $1, 2, 3, \ldots$ dropped
uniformly at random into $m$ boxes.  Let $W_i$ $1\le i \le m$ be the waiting
time until $i$ boxes have been filled.  Thus $W_i = 1$, $W_2 - W_1$ is
GEOMETRIC($1/m$), $W_3 - W_2$ is GEOMETRIC($2/m$), \ldots, $W_n - W_{n-1}$ is
GEOMETRIC($(m-1)/m$) and all these differences are independent.  Here, if $X$
is GEOMETRIC($\theta$), $\Prob(X = j) = \theta^{j-1}(1-\theta)$, $\E(X) =
1/\theta$, and $\VAR(X) = \frac{1}{\theta} \( \frac{1}{\theta} - 1\)$. Let
$E_t$ be the number of empty boxes at time $t$ and $L_t$ be the largest $\ell$
so that $W_\ell \le t$. If $L_m \le t$ all boxes are non-empty at time $t$ and
$E_t = 0$.  More generally, $L_t = m -E_t$.

The sum $\sum_{i=1}^m m_i$ is $ W_1 + \cdots + W_{L_n}$.
This sum may be controlled by showing that $E_n$ is
bounded with high probability and then bounding the sum by
Chebychev bounds.  The same argument works for $\sum_{i=1}^m M_i$. Toward this end, represent
$$E_n = \sum_{i=1}^m X_i \ \ \ \text{ where } \ \ \ X_i = \begin{cases} 1 & \text{ box $i$ is empty after $n$ balls} \\ 0 & \text{ box $i$ is not empty after $n$ balls} \end{cases}.$$
$$\E\( E_n\) = m \( 1- \frac{1}{m}\)^n, \ \ \ \VAR(E_n) = m \( 1 - \frac{1}{m}\)^n +
  m (m-1) \( 1- \frac{1}{m}\)^n - m^2 \( 1- \frac{1}{m}\)^{2n}.$$
By elementary estimates
\begin{equation}\label{eqn:4.1}
\E\( E_n\) = 1 + O\(\frac{C}{\sqrt{n}}\), \ \ \ \VAR(E_n) = 1 + O\(\frac{C}{\sqrt{n}}\).
\end{equation}
Indeed,
$m \( 1- \frac{1}{m}\)^n = e^{\log(m) - \frac{n}{m} + O\( \frac{n}{m^2}\)}$.
Using the assumption $m = \frac{n}{\alpha_n}
 + \frac{c\sqrt{n}}{\log(n)}$, $\log(m) = \alpha_n + O\( \frac{c}{\sqrt{n}}\),
 \frac{n}{m}  = \alpha_n + O\( \frac{1}{\sqrt{n} \log(n)}\).$  This gives the first result in
 \eqref{eqn:4.1}, the second follows similarly. By classical results \cite{CDM}, $E_n$ is approximately
 POISSON(1) distributed with an explicit total variation error but this is not needed.

Consider next
$$S_n = W_1 + \cdots + W_m = m W_1 + (m-1) (W_2 - W_1) + \cdots + 2 (W_{m-1} - W_{m-2})
+ (W_{m-1} - W_{m}).$$
\begin{align}
&\E(S_n) = \frac{m}{1} + \frac{m-1}{\frac{m-1}{m}} + \cdots + \frac{1}{\frac{1}{m}} = m^2 \\
&\VAR(S_n) = \sum_{i=1}^{m-1} (m-i)^2 \frac{m}{m-i} \( \frac{m}{m-i} -1\) = \sum_{i=1}^{m} m i = m\frac{m(m-1)}{2} \sim \frac{m^3}{2}.
\end{align}

Consider next the sum of the box maxima.  Drop balls labelled $n, n-1, \cdots, 1$ sequentially into $m$ boxes.
If the new arrivals are at times $\wt{W_1}, \wt{W_2}, \cdots, \wt{W_m}$, the box maxima are
$n-(\wt{W_1}-1), n - (\wt{W_2} - 1), \ldots, n - (\wt{W_m}-1)$.  The sum
$$\wt{S_n} = \sum_{i=1}^m m_i = nm - \( \wt{W_1} + \cdots + \wt{W_m}\) + n.$$
Thus
\begin{align}
&\E(\wt{S_n}) = n (m+1) - m^2 \\
&\VAR(\wt{S_n}) = m \frac{m(m-1)}{2}.
\end{align}

The random variable of interest is
$$D_n = \sum_{i=1}^m (M_i - m_i +1) = \wt{S_n} - S_n - \sum_{i=L_n + 1}^m \( \wt{W_i} - W_i\) +m.$$

The sum $\sum_{i=L_n+1}^m \wt{W_i} \le E_n \wt{W_m}.$ From the coupon collectors problem
$\wt{W_m}$ is of stochastic order $m\log(m) \sim n$ and $E_n$ is stochastically
bounded. A similar argument holds with $\wt{W_i}$ replaced by $W_i$.  It follows that the sum
$\sum_{i=L_n +1}^m \( \wt{W_i} - W_i\) = O_p(n).$  Combining terms
$$D_n = \wt{S_n} - \wt{S_n} + m + O_p(n) = nm  + \( \wt{S_n} - \E(\wt{S_n})\) - \( S_n - \E(S_n)\) + O_p(n).$$
By Chebychev's inequality
$\abs{S_n - \E(S_n)}$ and $\abs{\wt{S_n} - \E(\wt{S_n})}$ are both $O_p(m^{3/2})$. It follows that
$D_n = nm -2m^2 + O_p(m^{3/2})$.
\end{proof}

\begin{proof}[Proof of Theorem \ref{thm:dimension}]

To finish the proof of Theorem \ref{thm:dimension} note that conditional on $M$
$$
d(\lambda) = nM -2M^2+O_p(M^{3/2}).
$$
This is weak star close  to
$$
N\left( \frac{n^2}{\alpha_n}-\frac{2n^2}{\alpha_n^2},\frac{n^{3/2}}{\alpha_n}\right) + O_p\left(\frac{n}{\alpha_n} \right)^{3/2}.
$$
Since $(n/\alpha_n)^{3/2}$ is much smaller than the standard deviation of the normal, this is in turn close to
$$
N\left( \frac{n^2}{\alpha_n}-\frac{2n^2}{\alpha^2},\frac{n^{3/2}}{\alpha_n}\right).
$$
This completes the proof.
\end{proof}

\section{Proof of Theorem \ref{thm:crossing}}\label{sec:crossing}

This section contains the proof of Theorem \ref{thm:crossing}. Our approach is to compare the crossing statistic to the dimension statistic, which by Theorem \ref{thm:dimension} is known to be normally distributed.

\begin{proof}
To analyze the distribution of the crossing number, we compare it to the dimension index. We do this by producing a uniform random set partition $\lambda$ in the following unusual way:
\begin{itemize}
\item Pick $M$ from $\mu_n$.
\item Pick a uniform random set partition $\mu$ for that $m$ according to Stam's algorithm.
\item Let $\lambda$ be a uniform random set partition conditional on the event that the set of minimum elements of blocks of $\lambda$ is the set of minimum elements of blocks on $\mu$ and that the set of maximum elements of blocks of $\lambda$ equals the set of maximum elements of blocks of $\mu$.
\end{itemize}
This third step can be accomplished in the following way, assigning the elements of $[n]$ to blocks in order. We begin with no blocks and add elements to blocks one at a time, sometimes creating new blocks. If an element $k$, where $k$ is the maximum element of some block of $\mu$ is added to a block in $\lambda$, we declare that block \emph{closed}. After having assigned the first $k$ elements to blocks in $\lambda$, we assign $k+1$ to a uniform random un-closed block, unless $k+1$ is the minimum element of some block of $\mu$, in which case we assign $k+1$ to a new block of $\lambda$. This procedure clearly produces a uniform $\lambda$ subject to the restriction on the minimum and maximum elements of blocks.

On the other hand, this method of choosing $\lambda$ gives us a reasonable way to analyze $cr(\lambda)$. In particular, the crossing number of $\lambda$ equals the number of pairs of a $j\in[n]$ and a block $B$ in $\lambda$ with
\begin{itemize}
\item $j\not\in B$
\item $j$ not the first element of its block
\item $\max(B) > j$
\item The element of $B$ immediately preceding $j$ is larger than the element of $j$'s block immediately preceding $j$
\end{itemize}
We note that this is easy to analyze given the procedure above for choosing $\lambda$. Suppose that when $k$ is being added to $\lambda$ that there are $a_k$ blocks of $\lambda$ currently open. If $k$ is the first element of its block, then we have no crossings with $j=k$. Otherwise, we claim that the number of crossings with $j=k$ (which we call $X_k$) has distribution given by the discrete uniform random variable on $[0,a_k-1]$. In particular, if the open blocks are $B_1,\ldots,B_{a_k}$ whose element immediately preceding $k$ is $m_1<m_2<\ldots<m_{a_k}$, then $X_k = k-i$ if $k$ is assigned to block $B_i$. Note furthermore, that the $a_k$ are determined by $\mu$ and that the $X_k$ are independent conditional on $\mu$. Since $cr(\lambda)=\sum_k X_k$ is a sum of independent random variables, it is easy to see that conditioned on $\mu$ that with high probability $cr(\lambda)$ is
weak star close to
$$
N\left(\sum_{k\textrm{ not a minimum}}\frac{a_k-1}{2},\sqrt{\sum_{k\textrm{ not a minimum}}\frac{a_k^2-1}{12}} \right).
$$
We note that a given block contributes to $a_k$ if and only if $k$ is between is minimum and maximum values. Therefore,
$$
\sum_{k=1}^n (a_k-1) = \left(\sum_{i=1}^m M_i-m_i\right)-n = nm-2m^2 + O_p(m^{3/2}).
$$
On the other hand, the sum over $a_k$ at the start of blocks is the number pairs of blocks that overlap. Note that for $m = n/\alpha_n +o_p(n/\log^2(n))$, that any given block has $n/2$ between its minimum and maximum with probability $1-O(m^{-1/2})$. Thus, for $m$ in this range, the expected number of pairs of non-overlapping blocks is $O(m^{3/2})$. Thus,
$$
\sum_{k\textrm{ not a minimum}}\frac{a_k-1}{2} = nm/2 - 5m^2/4 + O_p(m^{3/2}).
$$
It is also easy to see that
$$
\sum_{k\textrm{ not a minimum}}(a_k^2-1) = n^2m(1+o_p(1)) = \frac{n^3}{\alpha_n}(1+o_p(1)).
$$
Therefore, with probability approaching $1$ over the choice of $m$, the distribution of $\lambda$
conditioned on $m$ is close to
$$
N\left( nm/2 - 5m^2/4, \frac{n^{3/2}}{\sqrt{12\alpha_n}}\right).
$$
This can be rewritten (up to small error) as the sum of $(n/2+5n/(2\alpha_n))(m-n/\alpha_n)$ and a variable with distribution
$$
N\left( \frac{n^2}{2\alpha_n} - \frac{5n^2}{4\alpha_n^2}, \frac{n^{3/2}}{\sqrt{12}\alpha_n}\right).
$$
On the other hand, by Theorem \ref{thm:hwang}, $(n/2+5n/(2\alpha_n))(m-n/\alpha_n)$ is approximated by an independent normal weak star close to
$$
N\left(0,\frac{n^{3/2}}{2\alpha_n}\right).
$$
Thus, the distribution of $cr(\lambda)$ is close in cdf distance to this sum of independent normals, which is given by
$$
N\left( \frac{n^2}{2\alpha_n} - \frac{5n^2}{4\alpha_n^2}, \frac{n^{3/2}}{\sqrt{3}\alpha_n}\right).
$$
This completes the proof.
\end{proof}


\end{document}